\newtheorem{theorem}{Theorem}
\newtheorem{corollary}[theorem]{Corollary}
\newtheorem{lemma}[theorem]{Lemma}
\newtheorem{proposition}[theorem]{Proposition}
\def\SIGMA{{s}}
\def\ucr{{\text{\rm cr}}}
\def\ignore#1{{}}
\begin{document}

\title{Shellable drawings and the cylindrical crossing number of $K_{n}$}
\author{Bernardo M.~\'{A}brego \\
{\small California State University, Northridge}\\
{\small bernardo.abrego@csun.edu } \and  Oswin Aichholzer  \\
{\small Graz University of Technology}\\
{\small oaich@ist.tugraz.at} \and Silvia Fern\'{a}ndez-Merchant \\
{\small California State University, Northridge}\\
{\small silvia.fernandez@csun.edu} \and Pedro Ramos \\
{\small Universidad de Alcal\'{a}}\\
{\small pedro.ramos@uah.es} \and Gelasio Salazar \\
{\small Universidad Aut\'{o}noma de San Luis Potos\'{\i}}\\
{\small gsalazar@ifisica.uaslp.mx}}
\maketitle

\begin{abstract}
The Harary-Hill Conjecture states that the number of crossings in any drawing of the complete graph $ K_n $ in the plane is at least
  $Z(n):=\frac{1}{4}\left\lfloor \frac{n}{2}\right\rfloor \left\lfloor
    \frac{n-1}{2}\right\rfloor \left\lfloor \frac{n-2}{2}\right\rfloor
  \left\lfloor \frac{n-3}{2}\right\rfloor$. In this paper, we settle the Harary-Hill conjecture for {\em shellable drawings}. We say that a drawing $D$ of $ K_n $ is {\em $ s $-shellable} if there exist a subset $ S = \{v_1,v_2,\ldots ,v_ s\}$ of the vertices and a region $R$ of $D$ with the following property: For all $1 \leq i < j \leq s$, if $D_{ij}$ is  the drawing obtained from $D$ by removing $v_1,v_2,\ldots v_{i-1},v_{j+1},\ldots,v_{s}$, then $v_i$ and $v_j$ are on the boundary of the region of $D_{ij}$ that contains $R$. For $ s\geq n/2 $, we prove that the number of crossings of any $ s $-shellable drawing of $ K_n $ is at least the long-conjectured value $Z(n)$. Furthermore, we prove that all cylindrical, $ x $-bounded, monotone, and 2-page drawings of $ K_n $ are $ s $-shellable for some $ s\geq n/2 $ and thus they all have at least $ Z(n) $ crossings. The techniques developed provide a unified proof of the Harary-Hill conjecture for these classes of drawings.
\end{abstract}

\section{Introduction}\label{sec:introduction}

In the late 1950s, the British artist Anthony Hill got interested in producing drawings of the complete graph $K_n$ with the least possible number of edge crossings. His general technique, explained in a paper he wrote jointly with Harary~\cite{HH63}, is best described by drawing $K_n$ on a cylinder as follows. Draw a cycle with $\lceil{n/2}\rceil$ vertices on the rim of the top lid, and a cycle with the remaining $\lfloor{n/2}\rfloor$ vertices on the rim of the bottom lid. Then draw the remaining edges joining vertices on the same lid using the straight line joining them across the lid. Finally, for any two vertices on distinct lids, draw the edge joining them along the geodesic that connects them on the side of the cylinder. (See Figure \ref{fig:FigCylindrical_2Page}, left, for a planar representation of such a drawing.) It is an elementary exercise to show that such a drawing of $K_n$ has exactly
$Z\left( n\right) :=\frac{1}{4}\left\lfloor \frac{n}{2}\right\rfloor
\left\lfloor \frac{n-1}{2}\right\rfloor \left\lfloor
\frac{n-2}{2}\right\rfloor \left\lfloor \frac{n-3}{2}\right\rfloor$
crossings. The Harary-Hill constructions are a particular instance of {\em
  cylindrical} drawings (see formal definition in Section~\ref{sec:verifyinghh}).

At about the same time as the Harary-Hill paper was published,
Bla\v{z}ek and Koman got independently interested in drawing $K_n$
with as few crossings as possible~\cite{BC64}. In their construction (see Figure \ref{fig:FigCylindrical_2Page}, right),
they start by drawing a cycle as a regular $n$-gon, and then drawing all diagonals with positive slope (as straight line segments) and all other edges outside the cycle.
The Bla\v{z}ek-Koman construction also yields drawings of $K_n$ with
exactly $Z(n)$ crossings, and it is a particular instance of {\em
  2-page} drawings (see below for the definition).

\begin{figure}[h]
\centering
\includegraphics[width=0.6\linewidth]{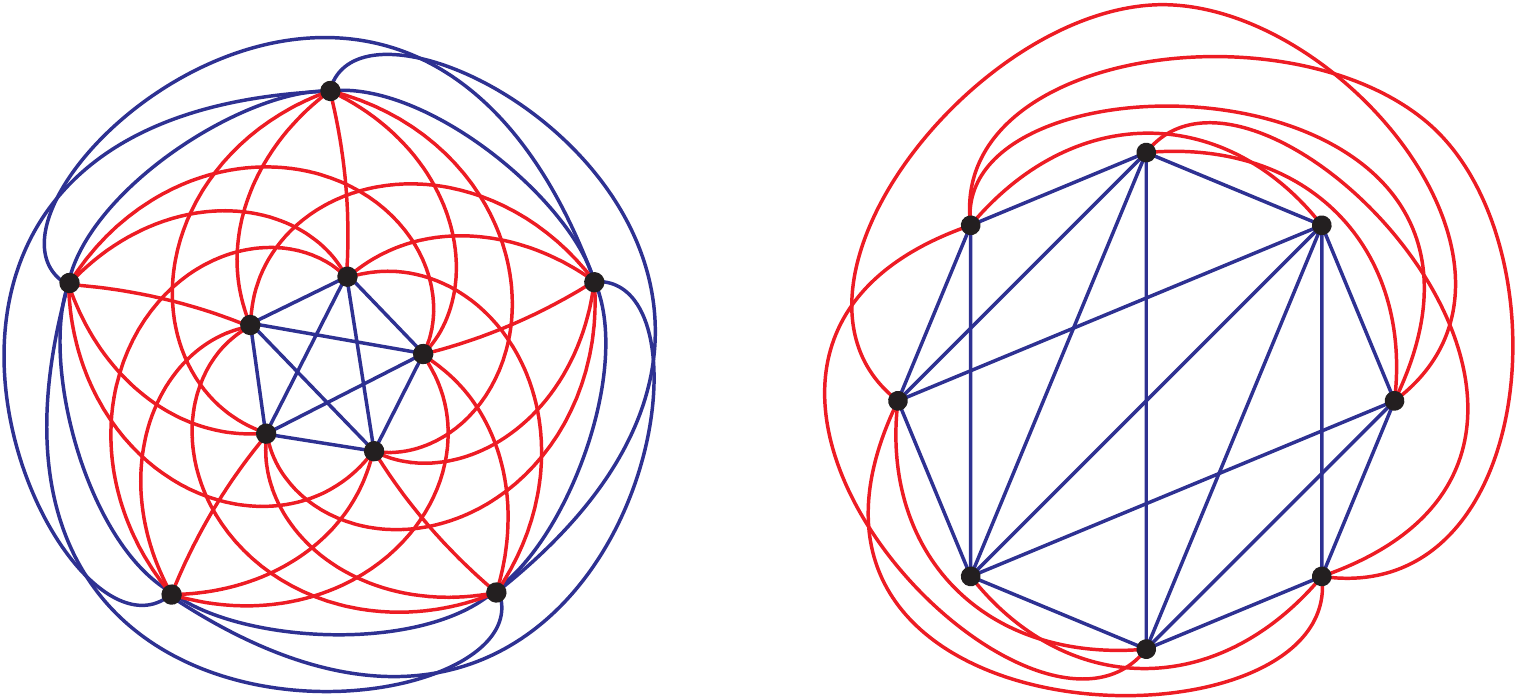}
\caption{\emph{Left}: Harary-Hill construction for 10 points. (A cylindrical drawing.) \emph{Right}: Bla\v{z}ek-Koman construction for 8 points. (A 2-page drawing.)}
\label{fig:FigCylindrical_2Page}
\end{figure}

To this date, no drawing of $K_n$ with fewer than $Z(n)$ crossings is
known. Moreover, all general constructions (for arbitrary values of
$n$) known with exactly $Z(n)$ crossings are
obtained from insubstantial alterations of either the Harary-Hill or
the Bla\v{z}ek-Koman constructions (a few exceptions are known, but
only for some small values of $n$).
{The tantalizingly open} {\em Harary-Hill conjecture}
$\ucr(K_n)=Z(n)$ has been
confirmed only for $n \le 12$~\cite{PR}.

The main contribution of this paper is the introduction of {\em
  shellable} drawings, a large class of drawings for which (as we shall
show) the Harary-Hill conjecture holds. Shellability captures the
essential features of 2-page drawings we previously used~\cite{AAFRS2,AAFRS} to prove
that the 2-page crossing number of $K_n$ is $Z(n)$, and allows us to
  extend the lower bound to a larger family of drawings, including
  cylindrical, monotone, and $x$-bounded drawings (see definitions
  below).

If a drawing $D$ of a graph is regarded as a subset of the plane, then a {\em region} of $D$ is a connected component of $\mathbb{R}^2\setminus D$. (If $D$ is an
embedding, then the regions of $D$ are the faces).  {A drawing $D$ of
$K_n$ is $s$-{\em shellable} if there exists a subset $ S = \{v_1,v_2,\ldots ,v_ s\}$ of the
vertices and a region $R$ of $D$ with the following
property. For $1\leq i < j \leq s$, if $D_{ij}$ denotes the drawing obtained
from $D$ by removing $v_1,v_2,\ldots v_{i-1},
v_{j+1},v_{j+2},\ldots,v_{s}$, then for all $1 \leq i < j \leq s$, the
vertices $v_i$ and
$v_j$ are on the boundary of the region of $D_{ij}$ that contains $R$.} The set $ S $ is an \emph{$ s $-shelling} of $D$ {\emph{witnessed} by $ R $.

The core of this paper is the following statement, whose proof is
given in Section~\ref{sec:pro1}.

\begin{theorem}\label{th:HHshellable}
Let $ D $ be an $ s $-shellable drawing of $ K_n $, for some $ s\geq
n/2$. Then $ D$ has at least $Z(n)$ crossings.
\end{theorem}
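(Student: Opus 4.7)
The plan is to prove Theorem~\ref{th:HHshellable} by induction on $n$, supported by a shellability-inheritance lemma and a standard averaging identity. The base case $n \leq 4$ is trivial since $Z(n) = 0$.

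The first key step is an inheritance lemma: if $D$ is $s$-shellable with shelling $v_1, \ldots, v_s$ witnessed by $R$, then for any $k \in \{1, \ldots, s\}$ the drawing $D - v_k$ is $(s-1)$-shellable, with shelling $v_1, \ldots, v_{k-1}, v_{k+1}, \ldots, v_s$ witnessed by the region of $D - v_k$ containing $R$. The key point is that deleting a vertex only merges regions, so if $v_i$ and $v_j$ lie on the boundary of the region containing $R$ in the appropriate subdrawing before the deletion, they remain on the boundary of the (enlarged) region containing $R$ afterward; the three cases $j < k$, $i \geq k$, and $i < k \leq j$ are verified by direct comparison of the subdrawings. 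Analogously, $D - u$ is $s$-shellable for any $u \in V \setminus S$. Together these give flexibility in which vertex to delete while retaining enough shellability to recurse.

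Second, I would combine the identity $(n - 4)\,\crg(D) = \sum_{v \in V} \crg(D - v)$ (each crossing of $D$ involves four vertices and survives in exactly $n - 4$ subdrawings $D - v$) with the inductive hypothesis. For $n$ odd, $s \geq (n+1)/2$ gives $s - 1 \geq (n - 1)/2$, so the inductive hypothesis applies to every $D - v$, yielding $\crg(D - v) \geq Z(n - 1)$; the identity then gives $\crg(D) \geq \frac{n}{n - 4} Z(n - 1) \geq Z(n)$ by a direct parity computation. For $n$ even with $s > n/2$, the same averaging works. In the boundary case $n$ even, $s = n/2$, the inductive hypothesis still applies to $D - v$ for $v \in V \setminus S$ (since the inherited $s$-shellability of $D - v$ satisfies $s \geq (n - 1)/2$), accounting for $(n/2) Z(n - 1)$ of the sum; the target $(n - 4) Z(n) = n Z(n - 1)$ then forces the remaining contribution from the $n/2$ shelling-vertex deletions to be at least $(n/2) Z(n - 1)$, i.e.\ $\crg(D - v_k) \geq Z(n - 1)$ on average.

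The main obstacle is precisely this boundary case: since $D - v_k$ is only $(s - 1)$-shellable with $s - 1 = (n - 2)/2 < (n - 1)/2$, the inductive hypothesis just misses. Two natural routes for closing the gap are to (i) show that in this parity-critical case the non-shelling vertices $V \setminus S$ form a complementary shelling of $D - v_k$, promoting it to $s$-shellability with $s \geq (n - 1)/2$ and allowing the inductive hypothesis to apply; or (ii) extract a direct lower bound on $\crg(D - v_k)$ using the pairwise shellability conditions on pairs $(i, j)$ spanning $v_k$. For (ii), the curves guaranteed by shellability in each $D_{ij}$ (from $v_i$ to $v_j$ through the region containing $R$, avoiding the edges of $D_{ij}$) induce a combinatorial ordering of the star at $v_k$ reminiscent of a convex rotation, which should yield the required count by arguments parallel to those in the authors' 2-page crossing number proof. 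This translation of the topological shellability condition into a combinatorial crossing count is where the real combinatorial content of the proof resides.
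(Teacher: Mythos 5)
Your strategy---induction on $n$ via vertex deletion, an inheritance lemma for shellability, and the averaging identity---is genuinely different from the paper's proof, which never inducts on $n$: the paper instead proves $E_{\leq\leq k}(D)\geq 3\binom{k+3}{3}$ by an induction \emph{along the shelling sequence} (using the rotation at a boundary vertex and a count of invariant $k$-edges) and then converts this into a crossing bound via a known identity. Your inheritance lemma is correct as stated. But the proposal contains a fatal error at its core: the averaging step fails for odd $n$. For $n$ odd, $Z(n)=\frac{1}{64}(n-1)^2(n-3)^2$ and $Z(n-1)=\frac{1}{64}(n-1)(n-3)^2(n-5)$, so
\[
\frac{n}{n-4}\,Z(n-1)-Z(n)=\frac{(n-1)(n-3)^2}{64}\left(\frac{n(n-5)}{n-4}-(n-1)\right)=-\frac{(n-1)(n-3)^2}{16(n-4)}<0 .
\]
The ``direct parity computation'' you invoke thus shows the opposite of what you claim: $\frac{n}{n-4}Z(n-1)<Z(n)$ whenever $n$ is odd (already at $n=5$, where averaging gives $5\cdot Z(4)/1=0<1=Z(5)$). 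The counting identity reproduces $Z(n)$ only when passing from odd $n-1$ to even $n$, where it is \emph{exactly} tight with no slack, and loses $\Theta(n^2)$ per step in the other parity. This is precisely why the Harary--Hill conjecture does not propagate from small cases by averaging, and it cannot be repaired within your framework: since every odd step fails, the even steps that depend on them fail too, and the induction never advances past the base case.

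Separately, the boundary case ($n$ even, $s=n/2$) is left open rather than resolved. Route~(i), that $V\setminus S$ forms a complementary shelling, has no justification---nothing in the definition constrains the non-shelling vertices---and route~(ii) is a pointer to ``arguments parallel to the 2-page proof'' rather than an argument. Those parallel arguments are exactly the $\leq k$-edge machinery of Propositions~\ref{pro:lastpoint}--\ref{pro:use}, and once that machinery is in place the induction on $n$ becomes unnecessary. As written, the proposal does not establish the theorem for any $n$ with $Z(n)>0$.
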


We use this to settle the Harary-Hill conjecture for several classes of drawings:

\begin{itemize}
\item In a {\em 2-page book drawing} (or simply {\em 2-page drawing}), the vertices are placed on a
  line (the {\em spine} of the {\em book}), and each edge (except for
  its endvertices) lies entirely on {an open halfplane spanned by the spine (one of the 2 {\em pages} of the book)}. (See Figure \ref{fig:FigGeneralCylindrical_2Page}, right.)
\item Following Schaefer~\cite{SC}, in a {\em cylindrical drawing} of
  a graph, there are two concentric circles that host all the vertices,
  and no edge is allowed to intersect these circles, other than at its
  endvertices. (Schaefer defines cylindrical drawings only for
  bipartite graphs, but his definition obviously applies to arbitrary
  graphs). (See Figure \ref{fig:FigGeneralCylindrical_2Page}, left.)
\end{itemize}

\begin{figure}[h]
\centering
\includegraphics[width=0.6\linewidth]{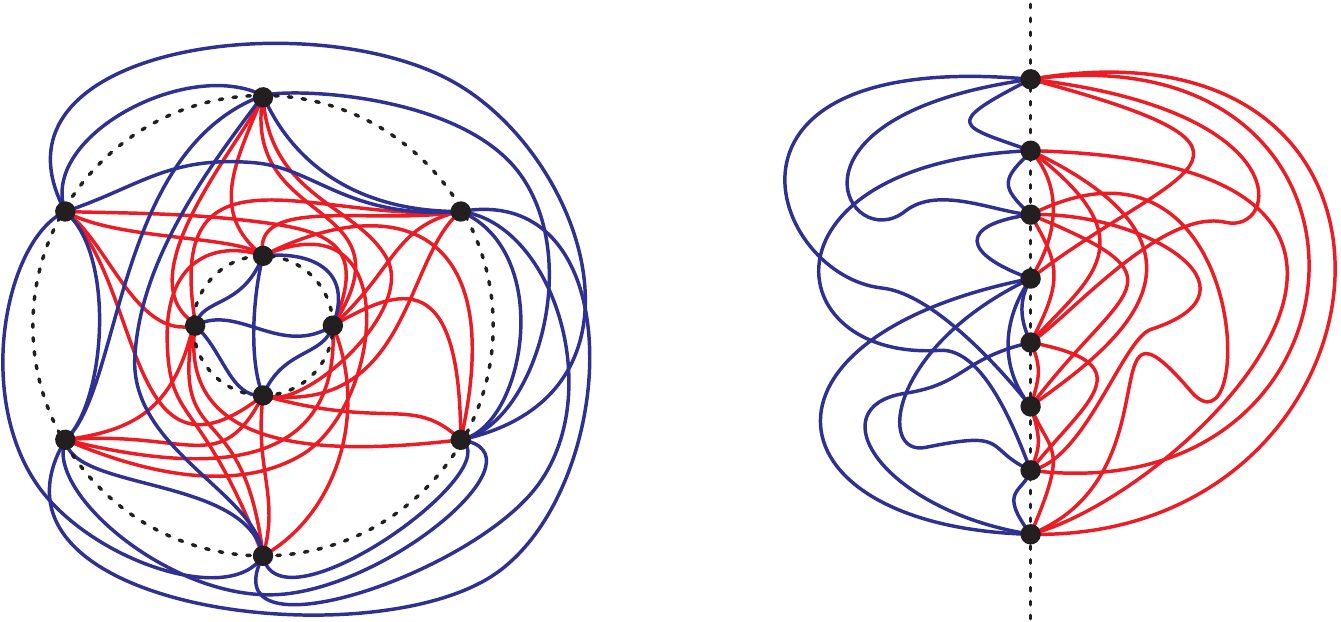}
\caption{\emph{Left:} A cylindrical drawing of $K_{10}$. \emph{Right:} A 2-page drawing of $K_{8}$.}
\label{fig:FigGeneralCylindrical_2Page}
\end{figure}

We remark that Hill's drawings can be naturally regarded as
cylindrical drawings. Indeed, even though
in Hill's drawings the edges joining consecutive rim vertices are
placed on the rims, such drawings are easily adapted to this
definition, since those edges can be drawn arbitrarily close to a rim.

\begin{itemize}
\item A drawing is {\em monotone} if each vertical line
intersects each edge at most once. (See Figure \ref{fig:FigMonotone_Bounded}, right.)

\item A drawing is {\em $x$-bounded} if
by labelling the vertices $v_1, v_2, \ldots, v_n$ in increasing order
of their $x$-coordinates, for all $1 \le i < j \le n$ the edge $v_i
v_j$ is contained in the strip bounded by the vertical line that contains
$v_i$ and the vertical line that contains $v_j$. (See Figure \ref{fig:FigMonotone_Bounded}, left.)
\end{itemize}

\begin{figure}[h]
\centering
\includegraphics[width=0.6\linewidth]{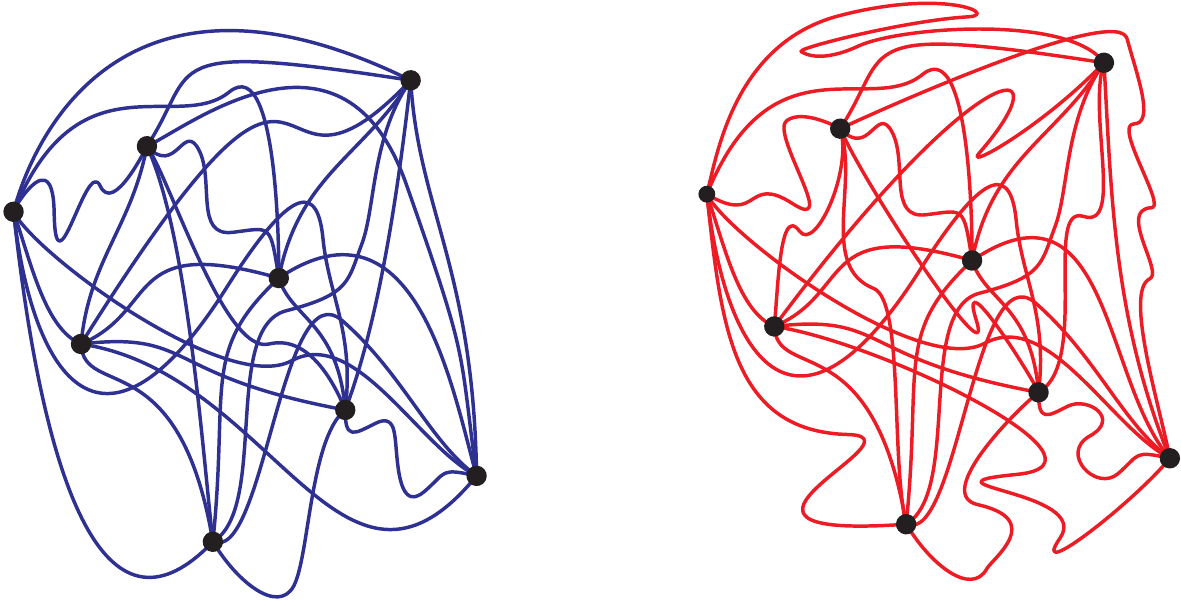}
\caption{\emph{Left:} A monotone drawing of $ K_8 $. \emph{Right:} An x-bounded drawing of $ K_8 $.}
\label{fig:FigMonotone_Bounded}
\end{figure}

In Section~\ref{sec:verifyinghh}, we find a
  condition on drawings of $K_n$ that guarantees that they are
  $s$-shellable for some $s\ge n/2$. Then we show that if $D$ is
  a crossing minimal
  2-page, cylindrical, monotone, or $x$-bounded drawing, then
  $D$ satisfies
  this condition, thus settling (in view of Theorem~\ref{th:HHshellable}) the Harary-Hill conjecture for all
  these families of drawings.
  Section~\ref{sec:cr} contains some concluding remarks.

\section{$k$-edges in shellable drawings and proof of Theorem~\ref{th:HHshellable}}\label{sec:pro1}

We recall that in a {\em good} drawing of a graph, no two edges share more than one point and no edge crosses itself. It is easy to show that every crossing minimal drawing of a
graph is good.

We generalized the geometrical concept of a $k$-edge
to arbitrary  {(topological)}  good drawings of $K_n$~\cite{AAFRS2,AAFRS}, as follows. Let $D$ be a good drawing of $K_n$, $pq$ a directed edge of $D$, and $r$ a vertex of $D$ distinct from $p$ and $q$. Then $pqr$ denotes the oriented closed curve defined by concatenating the edges $pq$, $qr$, and $rp$. An oriented, simple, and closed curve in the plane is oriented {\em counterclockwise} (respectively, {\em clockwise}) if the bounded region it encloses is on the left (respectively, right) hand side of the curve.  {Further,} $r$ is {\em on the left} (respectively, {\em right}) {\em side of} $pq$ if $pqr$ is oriented counterclockwise (respectively, clockwise). We say that the edge $pq$ is a $k$-edge of $D$ if it has exactly $k$ points of $D$ on one side (left or right), and thus $n-2-k$ points on the other side. Hence, as in the geometric setting, a $k$-edge is also an $(n-2-k)$-edge. The direction of the edge $pq$ is no longer relevant and every edge of $D$ is a $k$-edge for some unique $k$ such that $0 \le k \le \lfloor{n/2}\rfloor-1$.

Following our previous work~\cite{AAFRS2,AAFRS}, if $D$ is a good drawing of $K_n$, then for each $0\leq k\leq\lfloor n/2\rfloor-1$ we define the set of ${\leq}k$\emph{-edges} of $D$ as all $j$-edges in $D$ for $j=0,\ldots,k$. The number of ${\leq}k$-edges of $D$ is denoted by
\[
E_{{\leq}k}\left(  D\right)  :=\sum\limits_{j=0}^{k}E_{j}\left(  D\right)  .
\]
Similarly, we denote the number of ${\leq}{\leq}k$\emph{-edges }of $D$ by
\begin{equation}\label{e:atmostatmost}
E_{{\leq}{\leq}k}\left(  D\right)  :=\sum\limits_{j=0}^{k}E_{\leq j}\left(
D\right)  =\sum\limits_{j=0}^{k}\sum\limits_{i=0}^{j}E_{i}\left(  D\right)
=\sum\limits_{i=0}^{k}\left(  k+1-i\right)  E_{i}\left(  D\right)  .
\end{equation}
It is known \cite{AAFRS2,AAFRS} that if $ D $ is a good
drawing, then $D$ has exactly
\begin{equation}\label{e:crossingsvsedges}
2\sum_{k=0}^{\lfloor{n/2}\rfloor-2} E_{\le \le k}(D) -\frac{1}{2}{n\choose 2}\biggl\lfloor{\frac{n-2}{2}}\biggr\rfloor -\frac{1}{2} \left(1 + (-1)^n  \right) E_{\le \le \lfloor{n/2}\rfloor-2}(D)
\end{equation}
crossings. Thus we now concentrate on bounding $ E_{\le \le k}(D) $. We need a few more definitions. If $ D_y $ is the drawing of $ K_{n-1} $ obtained from $ D $ by deleting a vertex $ y $, then an edge non-incident to $ y $  is $ (D,D_y) $-{\em invariant} if for some $ 0\leq k \leq \left\lfloor(n-3)/2\right\rfloor $ it is a $ k $-edge in both $ D $ and $ D_y $. We let $E_{\leq k}(D,D_y)$ denote the number of $ (D,D_y) $-invariant $\leq k$-edges.

%

\subsection{Ordering the vertices with respect to a boundary point}

{The {\em unbounded region} of a drawing $D$ is its unique region with
noncompact closure. We refer to the  topological boundary of the
unbounded region of~$D$ simply as the {\em boundary of~$D$}.}

Let $ D $ be a good drawing of $ K_n $ and assume that $ x $  is a vertex on the boundary of $ D $. Then there is a natural order of the vertices of $ D_x $ induced by the order in which the edges of $ D $ \emph{leave}~$ x $. Namely, there is a disk $ \Omega $ with center $ x $ and radius $ \epsilon >0$ that intersects $ D $ only at the edges incident to $ x $. Moreover, for $ \epsilon $ small enough, $ \Omega $ intersects each edge incident to $ x $ in a simple connected Jordan curve. (See Figure \ref{fig:FigBoundaryOrder}.) Exactly two of these curves, say $ xy \cap \Omega$ and $ xz \cap \Omega$ for some vertices $ y $ and $ z $,  are on the boundary of $ D $. Suppose without loss of generality that the triangle $ xyz $ is oriented counter-clockwise. Then we can label the vertices of $ D_x $ by $ x_1,x_2,\dots,x_{n-1} $ so that $ x_1=y $, $ x_{n-1}=z $, and the Jordan curves $ xx_1\cap \Omega, xx_2\cap \Omega, \ldots ,xx_{n-1}\cap \Omega $ appear in counter-clockwise order around $ x $. We refer to this  as \emph{the order induced by $ x $ in $ D $}.

\begin{figure}
\centering
\includegraphics[width=0.5\linewidth]{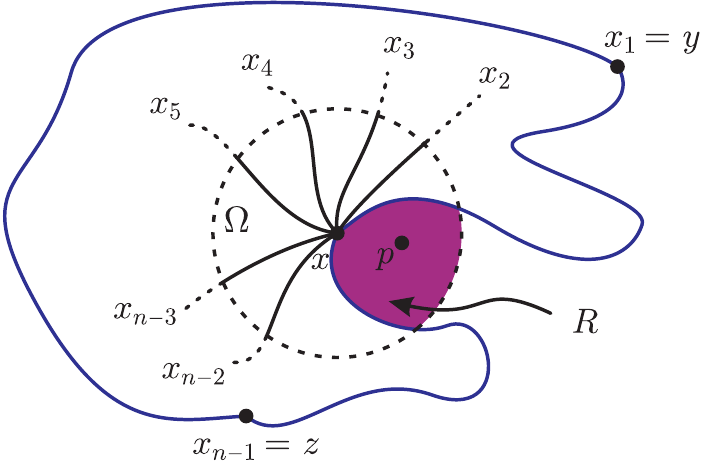}
\caption{The order induced by $x$.}
\label{fig:FigBoundaryOrder}
\end{figure}

\begin{proposition}\label{pro:lastpoint}
Let $ n\geq 1 $ and consider a good drawing $ D $ of the complete
graph $ K_n $. Let $ x $ be a vertex on the boundary of $ D $,  and
let $ x_1,x_2, \ldots,x_{n-1} $ be the order induced by $ x $ in $ D $. Then $ xx_i $ and $ xx_{n-i} $ are $i-1$-edges of $ D $ for $ 1\leq i \leq \left\lfloor(n-2)/2\right\rfloor$.
\end{proposition}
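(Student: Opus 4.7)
The plan is to reduce the proposition to a single topological claim: for all $1\le i<j\le n-1$ the oriented triangle $xx_ix_j$ is counter-clockwise, so that $x_j$ lies on the left side of the directed edge $xx_i$. Granting this claim, fix $i$ and partition the $n-2$ vertices other than $x$ and $x_i$ into $\{x_1,\ldots,x_{i-1}\}$ and $\{x_{i+1},\ldots,x_{n-1}\}$. Applying the claim with the roles of $i$ and $j$ interchanged shows that each $x_\ell$ with $\ell<i$ lies on the right of $xx_i$, while direct application shows that each $x_\ell$ with $\ell>i$ lies on the left. Hence $xx_i$ has $i-1$ vertices on one side and $n-1-i$ on the other, so it is a $\min(i-1,n-1-i)$-edge. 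For $1\le i\le \lfloor(n-2)/2\rfloor$ we have $2i\le n-2$, hence $i-1\le n-1-i$, and the minimum equals $i-1$; so $xx_i$ is an $(i-1)$-edge. Applying the same count to the edge $xx_{n-i}$ gives $n-i-1$ vertices on one side and $i-1$ on the other, and the minimum is again $i-1$, so $xx_{n-i}$ is an $(i-1)$-edge.

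To prove the claim, I would consider the closed curve $\Delta=xx_i\cup x_ix_j\cup xx_j$. Since $D$ is good, its three edges meet pairwise only at their common endpoints $x,x_i,x_j$, so $\Delta$ is a Jordan curve. Shrinking $\Omega$ if necessary, the edge $x_ix_j$ is disjoint from $\Omega$, so the arcs $xx_i\cap\Omega$ and $xx_j\cap\Omega$, meeting only at $x$, split $\Omega$ into two open sectors: a counter-clockwise sector $S_{\text{ccw}}$ from $xx_i$ to $xx_j$ which, by the definition of the order induced by $x$, contains only the edges $xx_{i+1},\ldots,xx_{j-1}$; and a clockwise sector $S_{\text{cw}}$ which contains the remaining edges $xx_\ell$ together with the sector between $xx_{n-1}$ and $xx_1$ that lies in the unbounded region of $D$. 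Since $\Delta$ is bounded and $S_{\text{cw}}$ is a connected subset of $\mathbb{R}^2\setminus\Delta$ that meets the unbounded region of $D$, the whole sector $S_{\text{cw}}$ lies in the unbounded component of $\mathbb{R}^2\setminus\Delta$, while the opposite sector $S_{\text{ccw}}$ lies in the bounded one. Walking along $\Delta$ in the order $x\to x_i\to x_j\to x$, immediately past $x$ along $xx_i$ the sector $S_{\text{ccw}}$ is on the left, so the bounded region of $\Delta$ lies on the left of the traversal, and $\Delta$ is therefore oriented counter-clockwise, as required.

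The main obstacle is the topological bookkeeping in the second paragraph: I have to set up the local picture at $x$ carefully, extract from the hypothesis ``$x$ lies on the boundary of $D$'' the existence of the single outside sector at $x$, and then use the Jordan curve theorem to transfer this local information about one specific sector into a global statement about the orientation of $\Delta$. Once that bridge between the induced cyclic order of edges at $x$ and the orientation of the triangles $xx_ix_j$ is in place, the partition of vertices into the two sides of $xx_i$ and the identification of $k$-edges are routine.
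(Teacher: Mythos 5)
Your proposal is correct and takes essentially the same route as the paper: both arguments use the fact that $x$ lies on the boundary of the unbounded region to produce a sector at $x$ (the paper's point $p$ outside the triangle $xx_1x_{n-1}$) that cannot lie in the interior of any triangle $xx_ix_j$, thereby forcing every such triangle to be oriented counter-clockwise, after which the count of vertices on each side of $xx_i$ is immediate. Your write-up merely makes the Jordan-curve bookkeeping more explicit than the paper's terser version.
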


\begin{proof}
Consider a disk $ \Omega $ as above. Then any point $ p $ in $\Omega $ and outside the triangle $ xyz $ is in the unbounded region of $ D $. (See Figure \ref{fig:FigBoundaryOrder}.) This means that $ p $ cannot be in the interior of any triangle of $ D $. In particular, if $ j<i $, then the triangle $ xx_jx_i $ is oriented counter-clockwise as otherwise its interior would contain $ p $. This means that $ x_j $ is to the right of $ xx_i $ if $ j < i $, and to the left if {$ j>i $}. Thus there are exactly $ i-1 $ vertices to the right of $ xx_i$ and $ n-1-i $ to the left. This means that $ xx_i $ is a $ \min(i-1,n-1-i) $-edge of $ D $, implying the result.
\end{proof}

\begin{proposition}\label{pro:invariant}
Let $ 0\leq i-1 \leq k \leq \left\lfloor(n-3)/2\right\rfloor $, $ D $ a good drawing of the complete graph $ K_n $, and $ x $ and $ y $ vertices of $ D $. Let $ U $ be a subset of $ i-1 $ vertices of $ D $ not including $ x $ and $ y $. Assume that $ x $ is on the boundary of the drawing $ D(U) $ obtained from $ D $ by removing $ U $. Then there exist at least $ k-i+2 $ edges incident to $x$ and non-incident to vertices in $ U $ that are $ (D,D_y) $-invariant $\leq k$-edges.
\end{proposition}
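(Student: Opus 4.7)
The plan is to apply Proposition~\ref{pro:lastpoint} to $D(U)$, which has $n-i+1$ vertices and carries $x$ on its boundary, obtaining an ordering $x_1,x_2,\ldots,x_{n-i}$ of the vertices of $D(U)$ around $x$. I would take as candidate edges the $2(k-i+2)$ edges
\[
xx_1,xx_2,\ldots,xx_{k-i+2}\qquad\text{and}\qquad xx_{n-i},xx_{n-i-1},\ldots,xx_{n-k-1};
\]
all are non-incident to $U$ by construction, and I will argue that at least $k-i+2$ of them are $(D,D_y)$-invariant $\leq k$-edges.

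The first step is to show every candidate is a $\leq k$-edge of $D$. The triangle-orientation argument in the proof of Proposition~\ref{pro:lastpoint} yields, for every $j\in\{1,\ldots,n-i\}$, that $xx_j$ has exactly $j-1$ vertices of $D(U)$ on its right and $n-i-j$ on its left. Reinserting the vertices of $U$, the counts in $D$ become $(j-1+\alpha,\,n-j-1-\alpha)$, where $\alpha\in\{0,\ldots,i-1\}$ counts the vertices of $U$ to the right of $xx_j$. The hypothesis $k\leq\lfloor(n-3)/2\rfloor$ gives $j+\alpha\leq k+1<n/2$, so the left side strictly exceeds the right and $xx_j$ is a $(j-1+\alpha)$-edge of $D$ with $j-1+\alpha\leq k$. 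The rightmost candidates are handled symmetrically, with the roles of left and right swapped.

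The second step identifies the $(D,D_y)$-invariant candidates. A short case analysis shows that for an edge $e$ non-incident to $y$ with a strictly larger side in $D$, the $k$-value of $e$ is preserved in $D_y$ precisely when $y$ lies on that larger side. Writing $y=x_\ell$, the same triangle-orientation argument places $y$ on the left of $xx_j$ iff $\ell>j$, and on the right of $xx_{n-i-m+1}$ iff $\ell<n-i-m+1$. Combined with the previous step (left is the strictly larger side for the leftmost candidates and right for the rightmost ones), this yields $\min(k-i+2,\ell-1)$ invariant leftmost edges and $\min(k-i+2,n-i-\ell)$ invariant rightmost edges.

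The last step is bookkeeping. The two blocks of candidates have disjoint index ranges because $k-i+2<n-k-1$ follows from $k\leq\lfloor(n-3)/2\rfloor$ and $i\geq 1$. If $\ell-1\geq k-i+2$ or $n-i-\ell\geq k-i+2$, then one block alone contributes $k-i+2$ invariant edges. Otherwise the two minima equal $\ell-1$ and $n-i-\ell$, whose sum is $n-i-1\geq k-i+2$ because $n\geq 2k+3$. The main subtlety I foresee is precisely formulating and verifying the ``strictly larger side implies invariance'' criterion at the $k$-edge level, and in particular confirming that every candidate really does have a strictly larger side (that is, that the inequality $j+\alpha<n/2$ is always strict); once that is in place, the rest of the argument is a careful but routine accounting of the indices $\ell$, $j$, and $m$.
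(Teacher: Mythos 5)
Your proof is correct and follows essentially the same route as the paper: both apply the ordering induced by $x$ in $D(U)$ (Proposition~\ref{pro:lastpoint}) to the same two blocks of candidate edges $xx_1,\ldots,xx_{k-i+2}$ and $xx_{n-k-1},\ldots,xx_{n-i}$, and both rely on the same criterion that an edge is $(D,D_y)$-invariant when $y$ lies on its strictly larger side. The only difference is the final bookkeeping: the paper notes that, depending on where $y=x_w$ falls, one \emph{entire} block of $k-i+2$ edges is invariant, whereas you sum partial contributions $\min(k-i+2,\ell-1)+\min(k-i+2,n-i-\ell)\geq k-i+2$ from both blocks; both accountings are valid.
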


\begin{proof}
Consider the order $ x_1,x_2,\ldots x_{n-i} $ induced by $ x $ in $ D(U) $. As before, $ x_\ell $ is to the right of $ xx_j $ if $ \ell < j $, and to the left if $ \ell >j $. Thus there are exactly $ j-1 $ vertices in $D(U)$ to the right of $ xx_j $ and $ n-i-j $ to the left. Including $ U $, this means that there are at most $ i-1+j-1=i+j-2 $ vertices to the right of $ xx_j $ in $ D $ and at most $ i-1+n-i-j=n-j-1 $ to the left.

Now consider the point $ y $, which is equal to $ x_w $ for some $ 1\leq w \leq n-i $. If $ w>k+2-i $, then for $ 1\leq j \leq k+2-i $ the edge $ xx_j $ has at most $ i+j-2 \leq i+(k+2-i)-2 = k$ points to its right and $ y $ on its left (because $ w>k+2-i \geq j $). If $ w \leq k+2-i $, then for $ n-k-1 \leq j \leq n-i $ the edge $ xx_j $ has at most $ n-j-1 \leq n-(n-k-1)-1 =k $ points to its left and $ y $ on its right (because $ k \leq (n-3)/2 < (n-3+i)/2 $ and thus $ w\leq k+2-i < n-k-1 \leq j $). In either case, the $ k+2-i $ edges $ xx_j $ are $ (D,D_y) $-invariant $\leq k$-edges.
\end{proof}

\subsection{Bounding the number of $\leq \leq k$-edges in shellable drawings of $ K_n $}

We now bound the number of $\leq\leq k$-edges of $ s $-shellable drawings of $ K_n $ for a certain interval of $ k $ determined by $ s $.

\begin{proposition}\label{pro:use}
Let $ D $ be an $ s $-shellable good drawing of the complete graph $
K_n $, {in which the region $R$ that witnesses the $s$-shellability of
$D$ is its unbounded region.} Then $E_{\leq \leq k} (D) \geq 3
\binom{k+3}{3}$ for all $ 0 \leq k \leq \min (s-2,\left\lfloor(n-3)/2\right\rfloor) $.
\end{proposition}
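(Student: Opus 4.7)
My plan is to establish the stronger per-level bound
\[
E_{\leq j}(D) \;\geq\; 3\binom{j+2}{2}
\]
for each $0\leq j\leq \min(s-2,\lfloor(n-3)/2\rfloor)$. Proposition~\ref{pro:use} then follows by the hockey-stick identity, since
\[
E_{\leq\leq k}(D) \;=\; \sum_{j=0}^{k} E_{\leq j}(D) \;\geq\; 3\sum_{j=0}^{k}\binom{j+2}{2} \;=\; 3\binom{k+3}{3}.
\]

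The first step is to exploit shellability to locate many ``boundary anchors''. Since the witness $R$ is the unbounded region of $D$, specializing the shellability condition at the pair $(i,s)$ shows that for each $1\leq i\leq s$ the vertex $v_i$ lies on the boundary of the unbounded region of $D(U_i)$, where $U_i := \{v_1,\dots,v_{i-1}\}$. A symmetric statement, using pairs $(1,s+1-i)$, shows that $v_{s+1-i}$ lies on the boundary of $D(U'_i)$ with $U'_i := \{v_{s+2-i},\dots,v_s\}$.

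Next I would apply Proposition~\ref{pro:invariant} at each anchor. Inspecting its proof reveals that, regardless of the choice of auxiliary vertex $y$, both the ``low-end'' edges $v_ix_1,\dots,v_ix_{j+2-i}$ and the ``high-end'' edges $v_ix_{n-j-1},\dots,v_ix_{n-i}$ of the order induced by $v_i$ in $D(U_i)$ are $\leq j$-edges of $D$; the $(D,D_y)$-invariance is only needed for the stronger invariance conclusion. This furnishes $2(j+2-i)$ $\leq j$-edges incident to $v_i$ and avoiding $U_i$. Attributing each such edge to its endpoint of smallest shelling-index makes the contributions for distinct $i$ disjoint, so summing over $i=1,\dots,j+1$ yields $2\binom{j+2}{2}$ $\leq j$-edges from the forward direction, and the same argument applied to the reversed shelling yields $2\binom{j+2}{2}$ more.

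The main obstacle is to show that the forward and backward batches overlap in at most $\binom{j+2}{2}$ edges, so that the combined distinct count is at least $4\binom{j+2}{2}-\binom{j+2}{2}=3\binom{j+2}{2}$. An overlapping edge must join a low-indexed shelling vertex (one of $v_1,\dots,v_{j+1}$) to a high-indexed one (one of $v_{s-j},\dots,v_s$), and for each such edge one must decide which ``end'' of each of the two induced orders it occupies. I expect that the joint boundary condition for $v_i$ and $v_j$ in every $D_{ij}$ — the full strength of shellability that the above argument has not yet used — forces enough compatibility between the forward and backward induced orders to cap this overlap at the required $\binom{j+2}{2}$; organizing this counting cleanly will be the most delicate step.
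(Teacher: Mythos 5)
Your argument has a genuine gap exactly where you flag it, and the step you defer is not a bookkeeping detail but the entire difficulty. After the forward and backward passes you have two batches of $2\binom{j+2}{2}$ $\leq j$-edges each (that much is sound: the two extreme ends of the order induced by $v_i$ in $D(U_i)$ do give $2(j+2-i)$ $\leq j$-edges of $D$, and attributing edges to the smallest-index anchor does make the counts within one pass disjoint). But an edge can lie in both batches whenever it joins some $v_a$ with $a\le j+1$ to some $v_b$ with $b\ge s-j$ and happens to occupy an extreme position in both induced orders; a priori there can be on the order of $(j+1)^2\approx 2\binom{j+2}{2}$ such edges, which would collapse your count back to the $2\binom{j+2}{2}$ you already had. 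Nothing you have extracted from shellability (which you have so far used only through the pairs $(i,s)$ and $(1,j)$) controls this overlap, and you give no argument for the cap $\binom{j+2}{2}$ --- you only express the hope that the unused pairs $(i,j)$ will provide it.

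There is also a structural reason to doubt the plan can be completed: the intermediate statement you reduce to, $E_{\leq j}(D)\ge 3\binom{j+2}{2}$ for each level $j$, is strictly stronger than the proposition, and the authors' earlier work \cite{AAFRS2} points out that this per-level bound (which does hold for rectilinear drawings) fails for some 2-page drawings of $K_n$ --- drawings that are $s$-shellable with the unbounded region as witness. That failure is precisely why the cumulative quantity $E_{\leq\leq k}$ was introduced. Accordingly, the paper's proof never works level by level inside $D$: it inducts over the nested subdrawings $D_{1,s-k}\subset D_{1,s-k+1}\subset\cdots$, proving $E_{\leq\leq i}(D_{1,s-k+i})\ge 3\binom{i+3}{3}$, and uses identity (\ref{NewIdentity}) to split each increment into an inductive term, a term $2\binom{i+2}{2}$ from the two $\ell$-edges at the newly added boundary vertex $v_{s-k+i}$ (Proposition~\ref{pro:lastpoint}), and a term $\binom{i+2}{2}$ of $(D_{1,s-k+i},D_{1,s-k+i-1})$-\emph{invariant} $\leq i$-edges supplied by Proposition~\ref{pro:invariant} at the anchors $v_1,\dots,v_{i+1}$. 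The invariance notion --- being a $\leq i$-edge in two consecutive subdrawings --- is what allows a deficit at one level to be compensated at another, and it has no counterpart in a single-level count of $\leq j$-edges of $D$. You would need to either prove your overlap cap directly (unlikely, given the counterexamples to the per-level bound) or restructure the argument around $E_{\leq\leq k}$ as the paper does.
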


\begin{proof}
Let $ V $ be the set of vertices of $ D $ and $ S = \{v_1,v_2,\ldots ,v_ s\}$ an $ s $-shelling of $ D $ {witnessed by the unbounded region $ R $}. Fix $ k $ with $ 0 \leq k \leq \min (s-2,\left\lfloor(n-3)/2\right\rfloor) $. We prove that
\begin{equation}\label{e:subdrawings}
E_{{\leq }{\leq i}}(D_{1,s-k+i})\geq 3 \binom{i+3}{3}
\end{equation}
for $ 0 \leq i \leq k $ by induction on $ i $. For $ i=0 $, because $ S $ is an $ s $-shelling of $ D $, {and the unbounded region
witnesses this $s$-shellability,} it follows that $ v_1$ and $v_{s-k} $ are on the boundary of $ D_{1,s-k} $. By Proposition~\ref{pro:lastpoint} each of these two vertices (they are different because $ k \leq s-2 $) is incident to two 0-edges and they can share at most one 0-edge. That is, $ E_{{\leq}{\leq}0}(D_{1,s-k}) \geq 3 $.
We now compare the following two identities obtained from (\ref{e:atmostatmost}). For $ 1\leq r \leq s $ and $ 0 \leq k' \leq \left\lfloor(n-s+r)/2\right\rfloor $,
\begin{equation}\label{e:allvertices}
E_{{\leq}{\leq}k'}(D_{1,r})=\sum_{j=0}^{k'}(k'+1-j)E_j(D_{1,r})
\end{equation}
and
\begin{equation}\label{e:allverticesbutone}
E_{{\leq}{\leq}k'-1}(D_{1,r-1})=\sum_{j=0}^{k'-1}(k'-j)E_j(D_{1,r-1}).
\end{equation}

As shown in our previous work \cite{AAFRS1}, for a $ j \leq k' $ a $ j $-edge incident to $ v_r $ contributes $ k'-j $ to (\ref{e:allvertices}) and nothing to (\ref{e:allverticesbutone}), a $ (D_{1,r},D_{1,r-1}) $-invariant edge contributes 1 more to (\ref{e:allvertices}) than to (\ref{e:allverticesbutone}), and all other edges contribute the same to (\ref{e:allvertices}) and (\ref{e:allverticesbutone}). Therefore,

\begin{equation}
E_{{\leq}{\leq}k'}(D_{1,r})=E_{{\leq}{\leq}k'-1}(D_{1,r-1})+\sum_{\ell=0}^{k'}(k'+1-\ell)e_\ell(v_r)+E_{\leq k}(D_{1,r},D_{1,r-1}),\label{NewIdentity}
\end{equation}
where $ e_\ell(r) $ is the number of $ \ell $-edges incident to $ v_r $ in $ D_{1,r}$.


Now, choose $ i $ such that  $ 1 \leq i \leq k$ and assume that
\begin{equation}\label{e:indhyp}
E_{{\leq }{\leq i-1}}(D_{1,s-k+i-1})\geq 3 \binom{i+2}{3}.
\end{equation}
By (\ref{NewIdentity}) for $ k'=i $ and $ r=s-k+i $, we have that
\begin{equation}
E_{{\leq}{\leq}i}(D_{1,s-k+i})=E_{{\leq}{\leq}i-1}(D_{1,s-k+i-1})+\sum_{\ell=0}^{i}(i+1-\ell)e_\ell(v_{s-k+i})+E_{\leq i}(D_{1,s-k+i},D_{1,s-k+i-1}),\label{e:InductionStep}
\end{equation}
We separately bound each term of the right-hand side of (\ref{e:InductionStep}). The first term is bounded in (\ref{e:indhyp}). For the second term, Proposition~\ref{pro:lastpoint} (for $ x=v_{s-k+i} $ is on the boundary of $ D_{1,s-k+i} $) implies that $ e_\ell(v_{s-k+i}) =2 $ and thus
\begin{equation}
\sum_{\ell=0}^{i}(i+1-\ell)e_\ell(v_{s-k+i})=\sum_{\ell=0}^{i}(i+1-\ell)2=2\binom{i+2}{2}.
\end{equation}
Finally, we show that
\begin{equation}
E_{\leq i}(D_{1,s-k+i},D_{1,s-k+i-1}) \geq  \sum_{\ell=1}^{i+1}(i-\ell+2) = \binom{i+2}{2}.
\end{equation}
We use Proposition~\ref{pro:invariant} for the drawing $ D_{\ell,s-k+i} $, $ x=v_\ell,y=v_{s-k+i} $, and $ U=\{v_1,v_2\ldots , v_{\ell -1}\} $. Note that $ k\leq s-2 $ implies $ 1 \leq \ell \leq i+1 < s-k+i $ and thus $ v_\ell $ and $ v_{s-k+i} $ are different and do not belong to $ \{v_1,v_2,\ldots v_{\ell -1}\} $. Moreover, $ v_\ell $ and $ v_{s-k+i} $ are on the boundary of $ D_{1,s-k+i} $ because $ S $ is an $ s $-shelling of $ D $. Also, $ D_{\ell,s-k+i} $ has $ n-s+(s-k+i)=n-k+i $ vertices and thus we must check that $ 0 \leq \ell -1 \leq i \leq (n-k+i-3)/2 $. The first two inequalities hold because $ 1 \leq \ell \leq i+1 $. The last inequality follows from $ k \leq \min(s-2,\left\lfloor(n-3)/2\right\rfloor)\leq \left\lfloor(n-3)/2\right\rfloor $, which implies $ k+i \leq 2k \leq n-3 $. Therefore, Proposition \ref{pro:invariant} implies that for $1 \leq \ell \leq i+1$ there are at least $i-\ell+2$ edges incident to $ v_\ell $ and non-incident to $ v_1,v_2,\ldots ,v_{\ell -1} $ (so all these edges are different) that are $ (D_{s-k+i-1},D_{s-k+i}) $-invariant $\leq i$-edges.
\end{proof}

\subsection{Proof of Theorem~\ref{th:HHshellable}}

Let $D$ be an $\SIGMA$-shellable drawing of $K_n$, for some $s \geq
n/2$. {By using a suitable inversion, if needed, we transform $D$ into
a drawing $D'$, with the same number of crossings as $D$, such that the
region that witnesses the $s$-shellability of $D'$ is the unbounded region.}
Since
$\min(s-2,\left\lfloor(n-3)/2\right\rfloor)=\left\lfloor(n-3)/2\right\rfloor$, it follows from
Proposition~\ref{pro:use} that $E_{\leq \leq k} (D') \geq 3 \binom{k+3}{3}$ for all $ 0 \leq k \leq \left\lfloor(n-3)/2\right\rfloor$.

Since $D'$ is a good
drawing, then by (\ref{e:crossingsvsedges}) $D'$ has exactly
\[
2\sum_{k=0}^{\lfloor{n/2}\rfloor-2} E_{\le \le k}(D') -\frac{1}{2}{n\choose 2}\biggl\lfloor{\frac{n-2}{2}}\biggr\rfloor -\frac{1}{2} \left(1 + (-1)^n  \right) E_{\le \le \lfloor{n/2}\rfloor-2}(D')
\]
crossings. Using this fact, a straightforward calculation \cite{AAFRS2,AAFRS} shows that if $D'$ is a
drawing of $K_n$ that satisfies $E_{\leq \leq k} (D') \geq 3
\binom{k+3}{3}$ for all $ 0 \leq k \leq
\left\lfloor(n-3)/2\right\rfloor $, then $D'$ has at least $Z(n)$
crossings.\hfill $\Box$

\section{Verifying the Harary-Hill conjecture for 2-page,
  cylindrical, \\ monotone, and $x$-bounded drawings}\label{sec:verifyinghh}

The workhorse of this section is a property of a drawing that
guarantees its shellability:

\begin{lemma}\label{lem:generalized}
Let $D$ be a drawing of $K_n$. Suppose that $C=v_1 v_2 \ldots v_s$ is
a cycle that satisfies the following: (i) the edge
$v_sv_1$ has no crossings; and (ii) for
$k=1,\ldots,s-1$ all crossings in the edge $v_kv_{k+1}$ involve edges
$v_iv_j$ with $i<k$ and $j>k+1$. Then $D$ is  $\SIGMA$-shellable.
\end{lemma}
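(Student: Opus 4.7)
The plan is to exhibit an explicit witness and verify the shellability condition by deleting the vertices of $S\setminus\{v_i,\ldots,v_j\}$ one at a time, tracking how the region containing $R$ enlarges. Take $R$ to be either of the two regions of $D$ that touch the edge $e_s=v_sv_1$. By condition~(i) the edge $e_s$ is uncrossed, so in particular both $v_1$ and $v_s$ lie on $\partial R$.

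For integers $k,\ell\geq 0$ with $k+\ell<s$, let $D^{(k,\ell)}$ denote the drawing obtained from $D$ by removing $v_1,\ldots,v_k$ and $v_{s-\ell+1},\ldots,v_s$, and let $R^{(k,\ell)}$ be the region of $D^{(k,\ell)}$ containing $R$. Since $D_{ij}=D^{(i-1,\,s-j)}$, it suffices to show $v_i,v_j\in\partial R^{(i-1,\,s-j)}$. The main statement will be that for all valid $k,\ell$ one has $v_{k+1}\in \partial R^{(k,\ell)}$ and $v_{s-\ell}\in \partial R^{(k,\ell)}$. I will prove the first by induction on $k$ (with $\ell$ arbitrary but fixed); the second follows by a symmetric induction on $\ell$.

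The base case $k=0$ reduces to $v_1\in \partial R^{(0,\ell)}$, which holds because $v_1\in \partial R$ and each successive back-deletion only merges sectors at $v_1$ (rather than removing it) into the region containing $R$. The inductive step is where condition~(ii) plays its crucial role. Assume $v_k\in \partial R^{(k-1,\ell)}$. Then $v_kv_{k+1}$ is \emph{uncrossed} in $D^{(k-1,\ell)}$: every crossing of $v_kv_{k+1}$ in $D$ involves a chord $v_av_b$ with $a<k<k+1<b$, and $v_a$ has already been deleted. Some sector at $v_k$ belongs to $R^{(k-1,\ell)}$; removing $v_k$ together with all its incident edges leaves a small disk around the former position of $v_k$ free of any edge of $D^{(k,\ell)}$, so this disk, and hence every sector that met at $v_k$, lies in a single region of $D^{(k,\ell)}$, which must be $R^{(k,\ell)}$. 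In particular $R^{(k,\ell)}$ contains both local sectors of $v_k$ flanking the uncrossed edge $v_kv_{k+1}$. Because $v_kv_{k+1}$ has just been deleted, the two local sectors at $v_{k+1}$ flanking it merge with those at $v_k$, so $v_{k+1}\in \partial R^{(k,\ell)}$.

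The main obstacle is the local topological bookkeeping in this inductive step: one must verify that deleting a vertex together with all its incident edges genuinely coalesces every sector at that vertex into one region of the new drawing, and that the uncrossedness of the next cycle edge in the current drawing is precisely what extends this merged region across $v_kv_{k+1}$ to place $v_{k+1}$ on its boundary. Once those local claims are pinned down, specializing to $k=i-1$ and $\ell=s-j$ (noting that $D^{(i-1,s-j)}$ depends only on the set of deleted vertices, not on the order of deletion) yields $v_i,v_j\in \partial R^{(i-1,s-j)}=\partial R'$, establishing shellability.
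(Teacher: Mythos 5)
Your proof is correct and rests on the same two observations as the paper's own argument: condition (ii) guarantees that the cycle edges $v_kv_{k+1}$ with $k<i$ or $k\ge j$ are uncrossed by the edges surviving in $D_{ij}$, and these (now deleted) arcs connect $v_i$ and $v_j$ to the region containing $R$ through the uncrossed edge $v_sv_1$. The paper carries this out in one step --- the whole path $v_i v_{i-1}\cdots v_1 v_s v_{s-1}\cdots v_j$ is a connected set disjoint from $D_{ij}$ and attached to $R$, hence lies in the closure of $R'$ --- while your induction peels off one vertex at a time with a local sector-merging argument, but the underlying idea is identical.
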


\begin{proof}
Let $R$ be a region of $ D $ containing the edge $ v_sv_1 $ on its boundary.  Let $1 \le i < j \le s$ and define $D_{ij}$ as before. Let $ R' $ be the region of $ D_{ij} $ that contains $ R $. Since the vertices $v_1, v_2, \ldots,
v_{i-1}, v_{j+1}, v_{j+2},\ldots, v_s$, and consequently any edge incident to one of these vertices, are
removed to obtain $D_{ij}$, then $ v_1 $ and $ v_s $ are in the interior of $ R' $. Moreover, it follows from the crossing properties of the edges of $C$ that the edges $v_1 v_2, v_2
v_3, \ldots, v_{i-1} v_i, v_{j}v_{j+1},$ $v_{j+1}v_{j+2}, \ldots, v_{s-1}v_s$ are not intersected by any edge of $ D_{ij} $. Hence the paths $ v_i,v_{i-1},\ldots ,v_1 $ and $ v_j, v_{j+1},\ldots ,v_s$ are completely contained in $ R' $ and thus $ v_i $ and $ v_j $ are on the boundary of $ R $. Therefore, $\{v_1, v_2, \ldots, v_s\}$ is an $s$-shelling of $D$ witnessed by $ R $.
\end{proof}

We need the full strength of Lemma~\ref{lem:generalized} to show that
monotone and $x$-bounded drawings satisfy the Harary-Hill
conjecture. However, it seems worth stating the following
weaker form, which is all we need to show that the
Harary-Hill conjecture holds for 2-page and cylindrical drawings:

\begin{corollary}\label{cor:crfree}
If a drawing $D$ of $K_n$ has a crossing-free cycle $C$ of size $s$
then $D$ is  $\SIGMA$-shellable.\hfill $\Box$
\end{corollary}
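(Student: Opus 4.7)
The plan is to derive Corollary~\ref{cor:crfree} as an immediate specialization of Lemma~\ref{lem:generalized}, observing that a crossing-free cycle satisfies both hypotheses of the lemma in a trivial (in fact vacuous) manner. No new construction is needed: the witnessing region will be the one inherited from Lemma~\ref{lem:generalized}, and the shelling sequence will be the cyclic ordering $v_1, v_2, \ldots, v_s$ of the vertices of $C$ itself.

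Concretely, I would argue as follows. Let $C = v_1 v_2 \ldots v_s$ be a crossing-free cycle in $D$. For hypothesis~(i) of Lemma~\ref{lem:generalized}, the edge $v_s v_1$ is an edge of $C$ and so it has no crossings, simply because \emph{no} edge of $C$ has a crossing. For hypothesis~(ii), fix any $k$ with $1 \le k \le s-1$ and consider the edge $v_k v_{k+1}$; again this edge belongs to $C$, hence is uncrossed, so the statement ``every crossing in $v_k v_{k+1}$ involves an edge $v_i v_j$ with $i < k$ and $j > k+1$'' is vacuously true. Both hypotheses of Lemma~\ref{lem:generalized} are thus satisfied by $C$.

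Applying Lemma~\ref{lem:generalized} directly yields that $D$ is $s$-shellable, with shelling $\{v_1, v_2, \ldots, v_s\}$ witnessed by a region of $D$ containing the edge $v_s v_1$ on its boundary. This completes the proof.

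There is no genuine obstacle here; the whole point of stating Corollary~\ref{cor:crfree} separately is to record the clean special case that is actually used in Section~\ref{sec:verifyinghh} for 2-page and cylindrical drawings, where one can readily exhibit a crossing-free Hamiltonian-type cycle but does not need the finer crossing-pattern condition~(ii) of the lemma. The only sanity check worth making is that the cycle $C$ is permitted to have length $s < n$ (so the shelling need not cover all of $V(D)$), which is consistent with the definition of $s$-shellability as stated in the introduction.
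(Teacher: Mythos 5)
Your proof is correct and matches the paper's intent exactly: the corollary is stated with no written proof precisely because it is the immediate (vacuous) specialization of Lemma~\ref{lem:generalized} that you describe. Nothing further is needed.
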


{We are finally ready to verify the Harary-Hill conjecture for several
classes of drawings.}

\begin{theorem}\label{thm:cyl}
Every cylindrical drawing of $K_n$ has at least $Z(n)$ crossings.
\end{theorem}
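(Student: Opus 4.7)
The plan is to reduce to Corollary \ref{cor:crfree} by locating a crossing-free cycle of length at least $n/2$ in every crossing-minimal cylindrical drawing of $K_n$. Let $D$ be such a drawing, with concentric circles $C_1$ (outer) and $C_2$ (inner) carrying $a$ and $b$ vertices respectively, so $a+b=n$. Renaming the circles if necessary, I may assume $a\ge n/2$. Label the vertices on $C_1$ as $u_1,u_2,\ldots,u_a$ in the cyclic order in which they appear along $C_1$, and let $C$ denote the cycle $u_1u_2\cdots u_au_1$ in $K_n$.

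The key step is to show that, without increasing the number of crossings, we may assume $C$ is drawn as a crossing-free closed curve hugging $C_1$. Because no edge of $D$ meets $C_1$ or $C_2$ except at its endpoints, for a sufficiently thin open annulus $A$ lying just outside $C_1$ the intersection $A\cap D$ consists solely of short radial-like arcs emanating from each $u_i$ into the outer region. In particular, between two consecutive outer vertices $u_i$ and $u_{i+1}$, the corresponding sub-annulus of $A$ is free of any edge segment. This permits rerouting each edge $u_iu_{i+1}$ (regardless of whether it was originally drawn in the outer region, in the middle annulus, or wrapping around) as a short arc in $A$ that hugs $C_1$ from $u_i$ to $u_{i+1}$, attached at each endpoint just outside the outermost edge already leaving that vertex into the outer region. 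The resulting drawing $D^{*}$ is still cylindrical, satisfies $\crg(D^{*})\le \crg(D)$, and exhibits $C$ as a crossing-free cycle of length $a\ge n/2$. Corollary \ref{cor:crfree} then implies that $D^{*}$ is $a$-shellable, and Theorem \ref{th:HHshellable} yields $\crg(D)\ge \crg(D^{*})\ge Z(n)$.

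The main obstacle will be the formal justification of the rerouting. Two local checks are needed: first, the new arcs for distinct consecutive pairs $(u_i,u_{i+1})$ and $(u_j,u_{j+1})$ must lie in disjoint portions of $A$ so they do not cross each other, which is automatic since the open arcs of $C_1$ between consecutive outer vertices are pairwise disjoint; second, at each vertex $u_i$ the two new arc-ends (to $u_{i-1}$ and to $u_{i+1}$) must be inserted into the local rotation without crossing any original edge incident to $u_i$. The latter follows from the fact that both new arc-ends enter $u_i$ from the outer region tangentially to $C_1$, so they can be slotted at the two extremes of the outer-region bundle in the rotation at $u_i$. Once these topological details are verified, the chain Corollary \ref{cor:crfree} $\Rightarrow$ $a$-shellability $\Rightarrow$ Theorem \ref{th:HHshellable} delivers the bound $\crg(D)\ge Z(n)$.
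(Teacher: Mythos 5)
Your proof is correct, and it shares the paper's skeleton---isolate the $s\ge n/2$ vertices on the more populous circle, produce a crossing-free $s$-cycle through them, and conclude via Corollary~\ref{cor:crfree} and Theorem~\ref{th:HHshellable}---but it establishes the crossing-free cycle by a genuinely different mechanism. The paper redraws nothing: it observes that in a crossing-minimal (hence good) cylindrical drawing the cycle $v_1v_2\cdots v_sv_1$ through the consecutive vertices of the chosen circle $\rho$ is \emph{already} uncrossed, since an edge crossing $v_iv_{i+1}$ would enter the vertex-free region bounded by that edge and the arc of $\rho$ between $v_i$ and $v_{i+1}$, and could only leave it by crossing $v_iv_{i+1}$ a second time (it cannot cross $\rho$), contradicting that no two edges cross more than once. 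You instead reroute the cycle edges into an auxiliary drawing $D^{*}$ with $\crg(D^{*})\le\crg(D)$. That works, and it buys you independence from the goodness of $D$ at this particular step; the price is the topological bookkeeping you acknowledge, which can be compressed considerably: since every edge of $D$ meets $C_1$ only at its endvertices, the arc of $C_1$ between the consecutive vertices $u_i$ and $u_{i+1}$ meets $D$ only at $u_i$ and $u_{i+1}$, so one may simply redraw $u_iu_{i+1}$ as that arc, pushed infinitesimally off $C_1$---this is the same remark the paper makes in the introduction when adapting Hill's constructions to its definition of cylindrical drawings. Two minor points to make explicit: the final application of Theorem~\ref{th:HHshellable} is to $D^{*}$, whose goodness (used in the proof of that theorem through Proposition~\ref{pro:use}) follows from the goodness of the crossing-minimal $D$ together with the fact that the new arcs are crossing-free; and ``crossing-minimal'' must be read as minimal within the class of cylindrical drawings, so that the reduction from arbitrary to crossing-minimal cylindrical drawings is legitimate.
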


\begin{proof}
{Let $D$ be a crossing-minimal cylindrical drawing of $K_n$. Out
  of the two concentric cycles that contain all the vertices, let
  $\rho$ be one that contains at least $n/2$ vertices.}
{Let $v_1, v_2, \ldots, v_s$ be the
vertices on $\rho$, in counterclockwise order. Since no two edges
cross each other more than once (this follows since $D$
is crossing-minimal)
and no edge crosses $\rho$, it follows that the cycle  $v_1 v_2 \ldots
v_s v_1$ is uncrossed in $D$. Since $s\ge n/2$, the result follows by
Theorem~\ref{th:HHshellable} and Corollary~\ref{cor:crfree}.}
\end{proof}

A 2-page drawing is a particular kind of a cylindrical drawing, namely, a degenerate one with all vertices on one of the
  concentric circles. Thus Theorem~\ref{thm:cyl} immediately implies our previous result ~\cite{AAFRS2,AAFRS} for 2-page drawings:

\begin{corollary}\label{cor:2page}
Every 2-page drawing of  $K_n$ has at least $Z(n)$ crossings.\hfill $\Box$
\end{corollary}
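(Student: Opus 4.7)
The plan is to derive the corollary as an immediate consequence of Theorem~\ref{thm:cyl}, by recognizing a 2-page drawing as a (degenerate) cylindrical drawing. Given a 2-page drawing $D$ of $K_n$ with spine $L$, I would compactify $L$ via a homeomorphism of the plane that sends its point at infinity to a finite point; the spine then becomes a topological circle $\rho$ carrying all $n$ vertices, with the two pages becoming the two topological disks bounded by $\rho$. To match the formal definition of cylindrical drawing (two concentric circles, no edge intersecting either one except at endvertices), I would next pick any small disk $B$ lying entirely inside a single face of $D$ and take its boundary to serve as a second circle $\rho'$; this $\rho'$ is vertex-free, disjoint from $\rho$, and crossed by no edge. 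A further ambient homeomorphism makes $\rho$ and $\rho'$ concentric, so $D$ is exhibited as a cylindrical drawing of $K_n$ in which one concentric circle carries all $n$ vertices and the other carries none.

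With this topological identification in hand, I would simply invoke Theorem~\ref{thm:cyl} to conclude that $D$ has at least $Z(n)$ crossings. No genuine obstacle arises: the only thing to verify is the identification of 2-page drawings as a subclass of cylindrical drawings, which the authors themselves flag as essentially by inspection. Alternatively, one could bypass the reinterpretation step entirely and argue directly via Corollary~\ref{cor:crfree}, by observing that after (if necessary) redrawing the consecutive spine edges along the spine itself (a modification that, in a crossing-minimal 2-page drawing, does not increase the number of crossings), the cycle $v_1 v_2 \cdots v_n v_1$ is a crossing-free cycle of length $n \ge n/2$, so $D$ is $n$-shellable and Theorem~\ref{th:HHshellable} yields the bound. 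Either route is essentially a one-line reduction once the machinery of Section~\ref{sec:verifyinghh} is available.
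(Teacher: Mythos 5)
Your primary route is exactly the paper's proof: the authors dispose of this corollary by observing that a 2-page drawing is a degenerate cylindrical drawing (all vertices on one of the two concentric circles) and invoking Theorem~\ref{thm:cyl}, which is precisely your compactify-the-spine argument spelled out in more topological detail. The proposal is correct and takes essentially the same approach.
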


{It is straightforward to check that any $x$-bounded drawing $D$ of $K_n$
satisfies the conditions of
Lemma~\ref{lem:generalized}. Thus the Harary-Hill conjecture holds for $x$-bounded drawings:}

\begin{theorem}\label{thm:xbounded}
Every $x$-bounded drawing of $K_n$ has at least $Z(n)$ crossings. \hfill $\Box$
\end{theorem}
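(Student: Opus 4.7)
The plan is to apply Lemma~\ref{lem:generalized} with $s = n$ and the cycle $C = v_1 v_2 \cdots v_n v_1$, where the vertices are labelled in increasing order of $x$-coordinate as in the $x$-bounded hypothesis; this gives $s = n \ge n/2$, and Theorem~\ref{th:HHshellable} will then deliver the desired bound.

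I would first dispose of condition (ii), which turns out to be automatic for any $x$-bounded drawing with this cycle. Fix $k \in \{1,\dots,n-1\}$ and suppose $v_a v_b$ (with $a < b$) crosses $v_k v_{k+1}$. Since crossings are not at shared endpoints, $\{a,b\} \cap \{k,k+1\} = \emptyset$. Because $v_a v_b$ lies in the vertical strip $[x_a, x_b]$ and $v_k v_{k+1}$ lies in $[x_k, x_{k+1}]$, these strips must overlap, giving $a \le k+1$ and $b \ge k$. Combined with endpoint-disjointness, this forces $a \le k-1$ and $b \ge k+2$, exactly as (ii) demands.

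The main work is condition (i): the edge $v_1 v_n$ need not be crossing-free in $D$. I would remove this obstacle by passing to a modified drawing $D'$ whose only difference from $D$ is the curve drawn for $v_1 v_n$: inside small disks around $v_1$ and $v_n$ the new curve coincides with the original (preserving the rotations at both endpoints), while in between it arcs high above every other edge of $D$ but stays within the vertical strip $[x_1, x_n]$. Such a detour exists because $D$ is a finite union of compact curves confined to that strip, so above some height $M$ the strip is empty; and because $v_1, v_n$ are the extreme leftmost and rightmost vertices, they lie on the unbounded face and can be reached from above without obstruction. The drawing $D'$ is then $x$-bounded and good, and because we have only removed crossings along $v_1 v_n$ and introduced none, we have $\crg(D') \le \crg(D)$.

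With both (i) and (ii) now holding in $D'$, Lemma~\ref{lem:generalized} certifies that $D'$ is $n$-shellable, and Theorem~\ref{th:HHshellable} yields $\crg(D) \ge \crg(D') \ge Z(n)$. I expect the main obstacle to be the redrawing step: one must verify that the new curve for $v_1 v_n$ creates no new crossings with edges incident to $v_1$ or $v_n$, which is handled by preserving the local rotation at those two vertices and by using their extremal positions to guarantee that the detour has unobstructed access to the top of the strip.
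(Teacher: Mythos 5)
Your overall strategy---apply Lemma~\ref{lem:generalized} to the cycle $v_1v_2\cdots v_n$ taken in increasing $x$-order, verify (ii) from the strip condition, and repair (i) by rerouting $v_1v_n$---is sound, and you are right that (i) is the nontrivial point. The paper simply asserts that the conditions of Lemma~\ref{lem:generalized} are ``straightforward to check,'' but the edge $v_1v_n$ of an $x$-bounded drawing can certainly be crossed (e.g.\ a straight-line monotone $K_4$ in which $v_2v_3$ crosses $v_1v_4$), so either a repair of the kind you propose, or a relaxation of condition (i) to ``some region has both $v_1$ and $v_s$ on its boundary'' (which is all the lemma's proof actually uses, and which the unbounded region supplies here since $v_1$ is leftmost and $v_n$ is rightmost), is genuinely needed. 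Your verification of (ii) matches what the paper intends.

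The gap is in your repair step. You require the new curve for $v_1v_n$ to agree with the old one in small disks around $v_1$ and $v_n$ and then escape to the top of the strip crossing nothing. Preserving the rotation does \emph{not} guarantee ``unobstructed access to the top'': the original edge $v_1v_n$ may leave $v_1$ into a \emph{bounded} face of $D$ minus that edge, in which case any curve with that initial segment must cross some other edge to escape. Concretely, in the straight-line drawing $v_1=(0,0)$, $v_2=(1,1)$, $v_3=(2,-1)$, $v_4=(3,0)$, the edge $v_1v_4$ leaves $v_1$ into the interior of the triangle $v_1v_2v_3$, so your detour is trapped behind $v_2v_3$. The fix is simply to drop the rotation-preservation requirement: nothing in Lemma~\ref{lem:generalized} or Theorem~\ref{th:HHshellable} refers to the rotation at $v_1$ or $v_n$, and since the open half-planes $x<x_1$ and $x>x_n$ are disjoint from $D$, a crossing-free arc leaving $v_1$ to the left, passing over the drawing, and entering $v_n$ from the right always exists; conditions (i) and (ii) then hold for $D'$ and $\crg(D)\ge\crg(D')\ge Z(n)$ as you argue. (One further point, shared with the paper's own treatment: your inference that $\{a,b\}\cap\{k,k+1\}=\emptyset$ assumes adjacent edges do not cross, i.e., that the drawing is good; one must first reduce to good $x$-bounded drawings, as the $k$-edge machinery behind Theorem~\ref{th:HHshellable} requires goodness in any case.)
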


Since every monotone drawing is obviously $x$-bounded, this implies the Harary-Hill conjecture for monotone drawings (previously proved by the authors~\cite{AAFRS1} and by Balko et al.~\cite{kyncl}):

\begin{corollary}\label{thm:monotone}
Every monotone drawing of $K_n$ has at least $Z(n)$ crossings.\hfill $\Box$
\end{corollary}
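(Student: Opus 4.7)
The plan is to deduce this corollary directly from Theorem~\ref{thm:xbounded} by verifying the containment of classes asserted in the paragraph just above it: every monotone drawing of $K_n$ is $x$-bounded. Once this containment is established, the lower bound $Z(n)$ for $x$-bounded drawings transfers immediately to the monotone setting.

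Fix a monotone drawing $D$ of $K_n$. First I would observe that any two vertices of $D$ have distinct $x$-coordinates: if vertices $u$ and $v$ shared an abscissa, then the vertical line through them would meet the edge $uv$ at the two distinct points $u$ and $v$, violating the defining property that each vertical line meets each edge at most once. Hence we may label the vertices $v_1,v_2,\ldots,v_n$ in strictly increasing order of $x$-coordinate, as required by the definition of $x$-boundedness.

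Next, for each edge $e=v_iv_j$ with $i<j$, let $\pi$ denote orthogonal projection onto the $x$-axis. Since $e$ is a continuous arc, $\pi(e)$ is a closed interval $[a,b]$ that contains both $x(v_i)$ and $x(v_j)$. If $a<x(v_i)$, then traversing $e$ from $v_i$ to $v_j$ forces the $x$-coordinate to drop below $x(v_i)$ and later climb back above it, so by the intermediate value theorem some vertical line with abscissa in $(a,x(v_i))$ meets $e$ in at least two points, contradicting monotonicity. The symmetric argument rules out $b>x(v_j)$. Hence $\pi(e)=[x(v_i),x(v_j)]$, which is to say that $e$ is contained in the closed vertical strip bounded by the vertical lines through $v_i$ and $v_j$. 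This is precisely the $x$-boundedness condition, and Theorem~\ref{thm:xbounded} then yields $\crg(D)\ge Z(n)$.

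Since the entire argument is a short topological unpacking of the two definitions, there is no real obstacle. The only step meriting explicit justification is the projection argument in the previous paragraph, which captures all the geometric content of the implication \emph{monotone} $\Rightarrow$ \emph{$x$-bounded}; the rest is a direct appeal to Theorem~\ref{thm:xbounded}.
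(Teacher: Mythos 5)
Your proposal is correct and matches the paper's argument exactly: the paper derives this corollary from Theorem~\ref{thm:xbounded} by noting that every monotone drawing is ``obviously'' $x$-bounded. You simply spell out that observation (distinct abscissas via the vertical line through two vertices, and the projection/intermediate-value argument confining each edge to its strip), which is a fair and accurate unpacking of what the paper leaves implicit.
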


%
%
%
%
%

\section{Concluding remarks}\label{sec:cr}

Cylindrical drawings of $K_n$ were previously investigated
by Richter and Thomassen~\cite{RT}. In that paper, they determined the
number of crossings in a cylindrical drawing of $K_{m,m}$ with one
chromatic class on the inner circle and the other chromatic class on
the outer circle. From their result it follows that a cylindrical
drawing of $K_{2m}$ in which the edges joining vertices on the same
circle are {not} drawn on the annulus (bounded by the two circles) has
at least $Z(2m)$ crossings.

As we observed in Section~\ref{sec:introduction}, the 2-page and the
cylindrical constructions (possibly with some insubstantial
alterations) are the only known drawings of $K_n$ with $Z(n)$ crossings for arbitrary values of $n$. In his
interesting entry at {\tt mathoverflow.net}, Kyn\v{c}l~\cite{kynclmo} asks about the
existence of alternative constructions, and observes that there is a
plethora of drawings with $Z(n) + O(n^3)$ crossings (noting that Moon
showed that a random spherical drawing of $K_n$ has expected crossing
number $(1/64)n(n-1)(n-2)(n-3)=Z(n)+O(n^3)$).

Balko et~al.~\cite{kyncl} noted that there are cylindrical drawings $ D $ that do not satisfy  the bound $E_{\leq \leq k} (D) \geq 3 \binom{k+3}{3}$. However, as shown in this paper, for every such drawing there exists a second drawing $ D' $ obtained from $ D $ by an appropriate inversion (and thus with the same number of crossings) that satisfies $E_{\leq \leq k} (D') \geq 3 \binom{k+3}{3}$.


\end{document}